


\begin{filecontents*}{lmj.00}
{lmj.vxxt01.tex}
\end{filecontents*}

\documentclass{lmj}
\includeonly{lmj.def,lmj2.def,lmj22.def}
\include{lmj.def}
\LMJCLS%

\usepackage{amsmath,amsfonts,amssymb,times,latexsym,mathrsfs}



\newcommand{\NN}{{\mathbb N}}


\newcommand{\cE}{\ensuremath{\mathcal{E}}}

\newcommand{\cQ}{\ensuremath{\mathcal{Q}}}
\newcommand{\cR}{\ensuremath{\mathcal{R}}}
\newcommand{\cS}{\ensuremath{\mathcal{S}}}


\newcommand{\sG}{\ensuremath{\mathscr{G}}}

\newcommand{\sR}{\ensuremath{\mathscr{R}}}

\newcommand{\sT}{\ensuremath{\mathscr{T}}}

%
%


\newcommand{\er}{\ensuremath{\mathrm{e}}}

\newcommand{\be}{\begin{equation}}
\newcommand{\ee}{\end{equation}}

\newcommand{\ssum}[1]{\sum_{\substack{#1}}}  
\newcommand{\sprod}[1]{\prod_{\substack{#1}}}  


\newcommand{\eps}{\ensuremath{\varepsilon}}


\renewcommand{\le}{\leqslant}

\renewcommand{\ge}{\geqslant}


\newcommand{\pfrac}[2]{\left(\frac{#1}{#2}\right)}  

%
%

\begin{document}
\bibliographystyle{plainlmj}
\begin{topmatter}

%
%


\title{A Hardy-Ramanujan type inequality for shifted primes and sifted sets}
\author{Kevin Ford}

\institution{Department of Mathematics, 1409 West Green Street, University
of Illinois at Urbana-Champaign, Urbana, IL 61801, USA}
\email{ford@math.uiuc.edu}

\begin{abstract}
We establish an analog of the Hardy-Ramanujan inequality for 
counting members of sifted sets with a given number of distinct prime factors.
In particular, we establish a bound for the
number of shifted primes $p+a$ below $x$ with $k$ distinct prime factors,
uniformly for all positive integers $k$.
\end{abstract}

\vskip 6pt


\begin{center}
\textit{In memory of Jonas Kubilius on the 100th anniversery of his birth.}
\end{center}

\end{topmatter}
\LMJarticle 


\section{Introduction}

The distribution of the number, $\omega(n)$, of distinct prime factors of a positive
integer $n$ has been well studied during the past century.
In 1917, Hardy and Ramanuajan \cite{HaRa} proved the inequality
\be\label{HR}
\pi_k(x) := \ssum{n\le x \\ \omega(n)=k} 1 \le C_1 \frac{x}{\log x}\; \frac{(\log\log x+C_2)^{k-1}}{(k-1)!},
\ee
where $C_1,C_2$ are certain absolute constants.
An asymptotic $\pi_k(x)$, valid for each \emph{fixed} $k$,
had earlier been proved by Landau in 1900.
The chief importance of \eqref{HR} lies in the \emph{uniformity} in $k$,
and it is this feature which allowed
Hardy and Ramanujan deduced from \eqref{HR} that $\omega(n)$ has 
\emph{normal order}\footnote{For a set of integers $n$ with counting
function $x-o(x)$ as $x\to\infty$, we have $\omega(n)=(1+o(1))\log\log n$ as $n\to\infty$.}
$\log\log n$.
An asymptotic for $\pi_k(x)$, uniform for $k\le C_3 \log\log x$
and arbitrary fixed $C_3$, was proved by Sathe and Selberg in 1954.
Thanks to subsequent work of a number of authors, notably Hildebrand 
and Tenenbaum \cite{HT88}, a uniform asymptotic for $\pi_k(x)$ is known in a much wider
range $k\le c \frac{\log x}{(\log\log x)^2}$, $c>0$ some constant.
The right side of \eqref{HR} represents the correct order of magnitude
of $\pi_k(x)$ when $k=O(\log\log x)$, but is slightly too large when
$k/\log\log x \to \infty$ as $x\to\infty$.
See Ch. II.6 in \cite{Tenbook} for a more detailed history of the problem
and concrete formulas for $\pi_k(x)$.

The Hardy-Ramanujan inequality \eqref{HR} has been extended and generalized 
in many ways, such as replacing the summation over $n\le x$ with
a restricted sum over shifted primes
\cite{erdos35, Tim95}, replacing the summand 1 with a 
multiplicative function \cite{PP},
counting integers with a prescribed number
of prime factors in disjoint sets \cite[Theorem 3]{FPoisson},
counting the prime factors of polynomials at integer arguments
\cite{ten18}, counting integers with
$\omega(n)=k_1$ and $\omega(n+1)=k_2$ simultaneously
\cite[Th. 18]{G}
or replacing $\omega(n)$ with an arbitrary
additive function \cite{kubilius, elliott}.

In this note we establish an analog of the Hardy-Ramanujan
theorem, with complete uniformity in $k$, for
prime factors of integers restricted by a sieve condition.
The main theorem is rather technical and we defer the precise statement
to Section \ref{sec:mainthm}.
Here we describe some corollaries which are easier to digest.

\subsection{Notation conventions.}
Constants implied by the $O$- and $\ll$-symbols are independent
of any parameter except when noted by a subscript, e.g. $O_\eps()$ means
an implied constant that depends on $\eps$.
We denote $\pi(x)$ the number of primes which  are $\le x$.
The symbols $p$ and $q$ always denote primes.

\subsection{Application: prime factors of shifted primes.}

Let $a$ be a nonzero integer.  The distribution of the prime factors of 
numbers $p+a$, $p$ being prime, plays a central role in investigations of
Euler's totient function, the sum of divisors function, orders and
primitive roots modulo primes, and primality testing algorithms
(for these applications, $a=\pm 1$).
It is expected that the distribution of the prime
factors of a random shifted prime $p+a\le x$ behaves very much
like the distribution of the prime factors of a random integer in $[1,x]$.
A complicating factor is that
the distribution of the large prime factors of $p+a$, say those $>\sqrt{p}$,
is poorly understood.  For example, Baker and Harman \cite{BH}
showed that infinitely often, $p+a$ has a prime factor
at least $p^{0.677}$, and this is not known with
$0.677$ replaced by a larger number.

In 1935, Erd\H os \cite{erdos35} proved that
 the function $\omega(p-1)$ has normal order
$\log\log p$ over primes $p$.  To show this, Erd\H os proved
an upper bound of Hardy-Ramanujan type for the number
of primes $p\le x$ with $\omega(p-1)=k$ in a restricted range of $k$.  
The bounds were sharpened by Timofeev \cite{Tim95},
who proved a conjecturally best possible upper bound
when $k=O(\log\log x)$.
Here we extend this bound to hold uniformity for all $k$, uniformity in $a$,
and correct a small error in Timofeev's bound when $a$ is odd.

\begin{cor}\label{cor:shift}
Let $a\ne 0$ and define $s=2$ if $a$ is odd, and $s=1$ if $a$ is even.
Then uniformly for $k\in \NN$, $x\ge 2|a|$ and all $a\ne 0$ we have
\[
\# \{-a < p\le x: \omega\big( \tfrac{p+a}{s} \big) = k \} \ll
\frac{|a|}{\phi(|a|)} \pi(x) \frac{(\log\log x+O(1))^{k-1}}{(k-1)! \log x}.
\]
\end{cor}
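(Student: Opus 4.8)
The plan is to bound $N_k := \#\{-a<p\le x : \omega(n)=k\}$, where I write $n=(p+a)/s$, by an upper-bound sieve applied to the shifted primes and organized by a Hardy--Ramanujan recursion on $k$. The unconditional inputs I would use are the Brun--Titchmarsh inequality $\pi(x;d,-a)\ll x/(\phi(d)\log(x/d))$, valid for $\gcd(a,d)=1$ and all $d\le x$, together with its binary refinement: a Selberg/Brun upper bound for the number of $p\le x$ for which both $p$ and a prescribed primitive linear form $\alpha p+\beta$ are prime, whose singular series is $\ll \frac{|a|}{\phi(|a|)}\frac{s}{\phi(s)}$ (times a bounded Euler product). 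First I would record why the factor $s$ is present: when $a$ is odd, $p+a$ is even for every odd prime $p$, so $2\mid p+a$ is forced; dividing by $s=2$ removes this compulsory prime, and it is exactly this point that fixes the count of ``free'' prime factors at $k$ rather than $k-1$ and corrects Timofeev's bound.

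Next I would split according to the largest prime factor of $n$, since $n\le X:=(x+a)/s$ has at most one prime factor exceeding $y:=x^{1/2}$. In the large-factor case $n=Pm$ with $P>y$ prime and $\omega(m)=k-1$ (automatically $m<y$), so $p=smP-a$ with both $P$ and $p$ prime: the binary sieve bounds the number of admissible $P$ by $\ll \frac{|a|}{\phi(|a|)}\,x/(\phi(sm)\log^2 x)$, using $\log(x/(sm))\gg\log x$. Summing over $m$ with $\omega(m)=k-1$ and invoking the Hardy--Ramanujan type estimate $\sum_{\omega(m)=k-1}1/\phi(m)\ll (\log\log x+O(1))^{k-1}/(k-1)!$ reproduces the target order, with the factor $|a|/\phi(|a|)$ coming entirely from the singular series. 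This already shows that the large prime factors, which the introduction flags as poorly understood, contribute at the full admissible size.

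For the remaining case, where every prime factor of $n$ is $\le y$, I would run the classical recursion $kN_k^{(A)}\le\sum_{q^v}M_{k-1}(q^v)$, where $M_{k-1}(q^v)$ counts $p$ with $q^v\| n$ and $\omega(n/q^v)=k-1$; here $p+a=sq^v m$ with $\omega(m)=k-1$, a copy of the original problem carrying the extra fixed modulus $sq^v\le sy$ lying within the level of distribution. I would close this by proving, by induction on $k$, a modulus-uniform bound $\#\{p\le x: p\equiv -a \pmod d,\ \omega((p+a)/(sd))=j\}\ll \frac{|a|}{\phi(|a|)}\frac{1}{\phi(d)}\frac{x}{\log^2 x}\frac{(\log\log x+O(1))^{j-1}}{(j-1)!}$, valid for $d$ up to the distribution level; the base case $j=1$ is itself a binary sieve (supplying $|a|/\phi(|a|)$), and the inductive step sums Brun--Titchmarsh estimates over the removed prime power $q^v$, using $\sum_q 1/q\le\log\log x+O(1)$ to advance both the power of $\log\log x$ and the factorial, while Bombieri--Vinogradov controls the accumulated error terms on average over the moduli.

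The hard part is the uniformity, on two fronts. First, one must keep the singular-series factor $|a|/\phi(|a|)$ from being reintroduced at every step: it must appear exactly once overall, tied to the single primality constraint on $p$, rather than as a power $(|a|/\phi(|a|))^{k}$, which requires threading the $\gcd(\cdot,a)$ conditions carefully through the recursion. Second, the Brun--Titchmarsh denominator $\log(x/d)$ degrades when the removed part $sd$ is close to $x$, i.e.\ precisely in the large prime factor regime; controlling these boundary contributions so that the factor $1/(k-1)!$ and the single $|a|/\phi(|a|)$ survive, simultaneously uniformly in $k$ and in $a$, is the main technical obstacle, and is exactly what the large prime factor split above is designed to absorb.
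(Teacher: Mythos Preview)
Your large-prime-factor case is essentially the paper's mechanism: peel off the largest prime $q=P^+(n)$ of $n=(p+a)/s$, write $p=sqr-a$, and bound the number of primes $q\le x/(rs)$ with $sqr-a$ prime by the standard two-dimensional sieve, picking up exactly one factor $|a|/\phi(|a|)$. Where you diverge is that the paper does \emph{not} split at $x^{1/2}$, does not use an induction on $k$, and does not invoke Bombieri--Vinogradov. It always removes only the single largest prime $q$, applies the sieve hypothesis once, and then controls the remaining sum $\sum_{\omega(r)=\ell,\,rP^+(r)\le x/s} g(r)/\log^{2}(x/(rs))$ directly by a Rankin-type argument (dyadic ranges of $x/r$ and the substitution $g(r)\to g(r)^{1-\alpha}$), treating the two possibilities $\ell=k-1$ and $\ell=k$ separately. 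This single-shot decomposition is what makes the argument uniform in $k$ without recursion.

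Your inductive scheme, as written, has a genuine gap. The hypothesis you propose, with denominator $\phi(d)\log^2 x$ and valid only for $d$ up to a fixed ``distribution level'', does not close: to bound $M_j(d)$ you must invoke the hypothesis at $M_{j-1}(dq^v)$, and after even two steps the accumulated modulus $dq^v$ can exceed any fixed power $x^{\theta}$ with $\theta<1$, since your small-factor case still allows each prime $\le x^{1/2}$. The base case $j=1$ is a binary sieve and yields $\ll \frac{|a|}{\phi(|a|)}\cdot\frac{x}{\phi(d)\log^2(x/d)}$, not $\log^2 x$; for $d$ near $x$ this is much larger than what you assume, so the hypothesis is false in the range you actually need. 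The repair is to carry $\log^2(x/d)$ throughout and then confront the sum $\sum_{q}\frac{1}{\phi(q)\log^2(x/(dq))}$, whose uniform control over all $d<x$ is precisely the content of the paper's technical lemma (handled there by Rankin's trick rather than induction). Finally, Bombieri--Vinogradov is a red herring here: you are proving an upper bound, and the pointwise sieve inequalities (Brun--Titchmarsh and the binary upper-bound sieve) already supply what is needed for every individual modulus; there are no error terms requiring averaging.
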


We remark that Timofeev worked with $\omega(p+a)$ rather than
$\omega(\frac{p+a}{s})$; when $a$ is odd and $s=2$, dividing by $s$ is necessary 
because 2 always divides $p+a$ when $p\ge 3$.
The corresponding lower bound is not known for any $k$, 
although it is conjectured to hold for every $k$ satisfying $k=O(\log\log x)$.
The problem of the lower bound is intimately connected with the
\emph{parity problem} in sieve theory.  The best lower bound in
this direction is Theorem 3 of Timofeev \cite{Tim95}
which states (in the case $a=2$) that
\[
\# \{p\le x: \omega(p+2) \in \{k,k+1\} \} \gg
\pi(x) \frac{(\log\log x+O(1))^{k-1}}{(k-1)! \log x}
\]
uniformly for $1\le k\ll \log\log x$.  The case $k=1$ is a
the celebrate Theorem of J.-R. Chen.

\subsection{Application: integers with restricted factorization}

Let $\cE$ be any set of primes and
let $\cQ(\cE)$ be the set of positive integers, all of whose
prime factors belong to $\cE$.  Let
\be\label{Ex}
E(x) = \ssum{p\in \cE \\ p\le x} \frac{1}{p}.
\ee

The next corollary was established by Tenenbaum
\cite[Lemma 1]{ten00} using a different
method.

\begin{cor}\label{cor:set}
 Uniformly for all $\cE$ and all $k\in \NN$
we have
\[
\# \{ n\le x, n\in \cQ(\cE) : \omega(n)=k \} \ll x \frac{(E(x)+O(1))^{k-1}}{(k-1)! \log x}.
\]
\end{cor}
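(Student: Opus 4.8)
The plan is to prove the estimate by induction on $k$, separating each $n$ according to the size of its largest prime factor $P^+(n)$; this is the Hardy--Ramanujan method carried out with all primes confined to $\cE$, so that $\log\log x$ is replaced by $E(x)$. Since every $n\le x$ has all of its prime factors at most $x$, I may assume $\cE\subseteq[2,x]$, and I write $N_k(y)=\#\{n\le y:n\in\cQ(\cE),\ \omega(n)=k\}$. The base case $k=1$ is immediate, as $N_1(x)$ counts prime powers and so is $\ll x/\log x$. For the inductive step I split $N_k(x)=T_k(x)+S_k(x)$, where $T_k$ counts the $n$ with $P^+(n)>\sqrt x$ and $S_k$ those with $P^+(n)\le\sqrt x$.

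The term $T_k$ can be handled directly, without recourse to the inductive hypothesis. Writing $n=Pm$ with $P=P^+(n)>\sqrt x$ (so necessarily $P\|n$ and $m<\sqrt x$), I sum first over $m\in\cQ(\cE)$ with $\omega(m)=k-1$ and then over the admissible primes $P$, which satisfy $P^+(m)<P\le x/m$. Their number is at most $\pi(x/m)\ll (x/m)/\log(2x/m)\ll x/(m\log x)$, the last step using $x/m>\sqrt x$. Hence
\[
T_k(x)\ll\frac{x}{\log x}\sum_{\substack{m\in\cQ(\cE)\\\omega(m)=k-1}}\frac1m\ \le\ \frac{x}{\log x}\,\frac{1}{(k-1)!}\Bigl(\sum_{q\in\cE}\frac{1}{q-1}\Bigr)^{k-1}=\frac{x}{\log x}\,\frac{(E(x)+O(1))^{k-1}}{(k-1)!},
\]
where the factorial comes from ordering the $k-1$ prime factors and $\sum_{q\in\cE}1/(q-1)=E(x)+O(1)$. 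This already has the exact shape claimed in the corollary.

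For $S_k$ I write $n=P^a m$ with $P=P^+(n)\le\sqrt x$, $P^a\|n$, $\omega(m)=k-1$ and $P^+(m)<P$, and apply the inductive hypothesis to the inner count over $m$, namely the analogue of $N_{k-1}(x/P^a)$ for the smaller set $\cE\cap[2,P)$, whose $E$-value is at most $E(P)$. Retaining only $a=1$ (the prime-power terms $a\ge2$ force $P\le\sqrt x$ and contribute a lower-order amount), this reduces matters to the weighted prime sum
\[
S_k(x)\ll x\sum_{\substack{P\in\cE\\P\le\sqrt x}}\frac{1}{P\,\log(2x/P)}\,\frac{(E(P)+O(1))^{k-2}}{(k-2)!},
\]
and the factorial is advanced from $k-2$ to $k-1$ by comparing the sum with the integral $\int(E+O(1))^{k-2}\,dE=(E+O(1))^{k-1}/(k-1)$, which supplies a telescoping bound $\sum_{P}\tfrac{1}{P}(E(P)+O(1))^{k-2}\le(E(x)+O(1))^{k-1}/(k-1)$.

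The main obstacle is to carry out this last step while keeping the leading constant in front of $E(x)$ equal to $1$: the weight $1/\log(2x/P)$ exceeds $1/\log x$ by up to a factor $2$ as $P$ approaches $\sqrt x$, and a naive bound would multiply the implied constant by $2$ at every level of the induction, which is fatal. This is precisely the classical Hardy--Ramanujan difficulty of controlling the behaviour near the top of the range, and I would overcome it by not lowering $1/\log(2x/P)$ crudely but instead summing the contribution honestly against the measure $dE$, exploiting that the factor-$2$ region $P\approx\sqrt x$ is exactly where $E(P)$ is smallest; iterating the splitting over the dyadic scales of $x/P$ then makes the resulting losses sum to a convergent series that is absorbed into the additive $O(1)$, uniformly in $k$ and in $\cE$. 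With this in hand the induction closes; taking $\cE$ to be the set of all primes, so that $E(x)=\log\log x+O(1)$, recovers the Hardy--Ramanujan inequality \eqref{HR}, and the corollary is the trivial-sieve instance of the main theorem.
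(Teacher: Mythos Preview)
Your approach is genuinely different from the paper's. The paper does not argue by induction on $k$ at all: it deduces Corollary~\ref{cor:set} immediately from Theorem~\ref{thm:main} by taking $\cS=\cQ(\cE)$, $s=1$, $\lambda=1$, $g(p^v)=p^{-v}\mathbf 1_{p\in\cE}$, so that \eqref{sieve} is just Chebyshev's bound $\pi(x/r)\ll (x/r)/\log(2x/r)$ and $G(x)=E(x)$. The real work is in Lemma~\ref{omega-lem}, which bounds $\sum_{\omega(r)=\ell,\,rP^+(r)\le x} g(r)/\log^\lambda(x/r)$ \emph{directly} by the Rankin trick on dyadic ranges $x/Q_{j-1}\le r\le x/Q_j$ with $Q_j=x^{1/2^j}$; there is no recursion in $k$ and hence no constant to propagate. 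This buys the paper complete uniformity in $k$ and, more importantly, a single argument that applies to all the sifted sets in the other corollaries.

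Your $T_k$ term is fine, and you correctly isolate the classical obstruction in $S_k$: the factor $1/\log(2x/P)$ can exceed $1/\log x$ by a factor $2$ when $P$ approaches $\sqrt x$, and a naive induction multiplies the implied constant by (at least) $2$ at every step. However, your proposed resolution is not a proof. The sentence ``the factor-$2$ region $P\approx\sqrt x$ is exactly where $E(P)$ is smallest'' is false --- $E$ is nondecreasing, so $E(P)$ is \emph{largest} there --- and the rest (``summing honestly against $dE$'', ``iterating the splitting over dyadic scales'') is a description of a hope, not an argument. In particular, partial summation against $dE$ still leaves a boundary term $\frac{(E(\sqrt x)+C)^{k-1}}{(k-1)\cdot\frac12\log x}$ carrying the very factor $2$ you are trying to eliminate, so the induction does not close as written. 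The $a\ge2$ contribution in $S_k$ also needs more than ``lower order'': the crude bound $\sum_P\sum_{a\ge2}x/P^a\ll x$ is far larger than the target when $k$ is near $\log\log x$; one must use that $P^+(m)^2\mid m$ forces $m$ into a set of density $\exp\{-c\sqrt{\log x}\}$, exactly as in the paper's proof of Theorem~\ref{thm:main}.

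If you want to salvage the inductive route, the clean fix is to abandon the split at $\sqrt x$ and instead prove Lemma~\ref{omega-lem} (or an equivalent) first: bound $\sum_{r\in\cQ(\cE),\,\omega(r)=k-1,\,rP^+(r)\le x}\frac{1}{r\log(x/r)}$ directly via Rankin, then remove a single largest prime from $n$ and sum. That is essentially the paper's proof and avoids the recursion entirely.
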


We also establish a count of shifted primes $p+a$ with 
a given number of prime factors, such that $p+a$ only has prime
factors from a given set, generalizing Corollaries \ref{cor:shift} and \ref{cor:set}.

\begin{cor}\label{cor:shift-set}
Let $a\ne 0$, and let $s=1$ if $a$ is even and $s=2$ is $a$ is odd.
Let $\cE$ be any nonempty set of primes, and define $E(x)$ by \eqref{Ex}.
 Uniformly for all $a$, all $x\ge 2|a|$, all $\cE$ and all $k\in \NN$
we have
\[
\# \{ -a<p\le x, \tfrac{p+a}{s}\in \cQ(\cE) : \omega\big(\tfrac{p+a}{s}\big)=k \} \ll \frac{|a|}{\phi(|a|)} \pi(x) \frac{(E(x)+O(1))^{k-1}}{(k-1)! \log x}.
\]
\end{cor}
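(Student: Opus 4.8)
The plan is to deduce Corollary~\ref{cor:shift-set} from the main theorem of Section~\ref{sec:mainthm}, in exactly the way Corollary~\ref{cor:shift} is deduced, the one change being that the admissible prime factors are restricted to $\cE$. Indeed, Corollary~\ref{cor:shift-set} is the common generalization of Corollaries~\ref{cor:shift} and~\ref{cor:set}: the former is the case $\cE=\{\text{all primes}\}$ (whence $E(x)\sim\log\log x$), and the latter replaces the shifted primes by the full interval $[1,x]$. I would take the ambient sequence to be $\cA=\{(p+a)/s : -a<p\le x\}$ and count its members lying in $\cQ(\cE)$ with exactly $k$ prime factors. The essential observation is that confining the prime factors to $\cE$ only shrinks every sum and product in the argument, so nothing in the method of Corollary~\ref{cor:shift} needs to change; one simply carries $E(x)=\sum_{q\in\cE,\,q\le x}1/q$ in place of $\sum_{q\le x}1/q=\log\log x+O(1)$.

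The sole arithmetic input is an upper-bound sieve estimate for the density of $\cA$ in residue classes. For a prime $q\nmid a$ I would record, via the Brun--Titchmarsh inequality,
\[
\#\{\,-a<p\le x : q\mid\tfrac{p+a}{s}\,\}\;=\;\#\{\,p\le x : p\equiv -a \!\!\pmod{sq}\,\}\;\ll\;\frac{\pi(x)}{\phi(sq)},
\]
uniformly; here the divisor $s$ is precisely what prevents $q=2$ from being forced into every element of $\cA$ when $a$ is odd. The associated local density is therefore $g(q)\asymp 1/\phi(sq)\asymp 1/(q-1)$, so the relevant prime sum is $\sum_{q\in\cE,\,q\le x}1/\phi(q)=\sum_{q\in\cE,\,q\le x}1/(q-1)$; since $1/(q-1)=1/q+O(1/q^2)$ and $\sum_q 1/q^2$ converges, this equals $E(x)+O(1)$, which is exactly the base of the power in the claim. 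A Mertens-type evaluation of the sieve main term supplies the factor $\asymp 1/\log x$, while the anomalous local densities at the primes $q\mid a$ assemble into the constant $|a|/\phi(|a|)$.

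With these densities verified, the Hardy--Ramanujan induction on $k$ built into the main theorem does the rest: each extraction of one prime factor contributes the factor $E(x)+O(1)$ just computed, and iterating this $k-1$ times, while accounting for the $k$-fold multiplicity with which every admissible element is counted (once for each of its prime factors), yields $(E(x)+O(1))^{k-1}/(k-1)!$; the final primality condition on $p$ contributes the remaining $1/\log x$, and the local factors at $q\mid a$ give $|a|/\phi(|a|)$.

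The main obstacle is not the restriction to $\cE$, which is essentially free, but the simultaneous uniformity in $k$ and in $a$ --- and this is inherited wholesale from the proof of the main theorem rather than arising anew here. I expect the two delicate points to be: (i) the large prime factors $q>x^{1/2}$, where Brun--Titchmarsh no longer controls the progression $p\equiv -a\pmod{sq}$, so that one must instead bound the complementary cofactor and absorb its contribution into the $O(1)$ in the base; and (ii) checking that the local factors at the primes dividing $a$ combine to exactly $|a|/\phi(|a|)$ uniformly in $a$, with the prime $2$ treated correctly through $s$. Both are resolved already in Corollary~\ref{cor:shift}, so beyond replacing $\log\log x$ by $E(x)$ no genuinely new analytic work should be required.
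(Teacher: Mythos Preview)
Your top-level plan---apply Theorem~\ref{thm:main} with $g$ supported on $\cQ(\cE)$, so that $G(x)=E(x)+O(1)$---is exactly what the paper does. But you have misidentified the arithmetic input needed to verify hypothesis~\eqref{sieve}. That hypothesis does \emph{not} ask for the density of $\cA$ in a residue class modulo a single prime $q$; it asks for
\[
\#\Big\{ q\le \tfrac{x}{rs}:\ q\ \text{prime and}\ qrs\in\cS \Big\}
=\#\Big\{ q\le \tfrac{x}{rs}:\ q\ \text{and}\ qrs-a\ \text{both prime} \Big\},
\]
which is a twin-prime-type count in the variable $q$. Brun--Titchmarsh (primes in a progression) is the wrong tool here and would only yield one logarithm; what is needed is an upper-bound sieve for prime pairs (e.g.\ Corollary~2.4.1 of Halberstam--Richert), which gives
\[
\ll \frac{|ars|}{\phi(|ars|)}\,\frac{x}{rs\,\log^{2}\!\big(\tfrac{2x}{rs}\big)}.
\]
This is how the paper proceeds: one takes $\lambda=2$, $g(r)=1/\phi(r)$ on $\cQ(\cE)$ and $0$ otherwise (so $g\in\sG(2)$), and reads off $B\ll |a|/\phi(|a|)$ directly from the sieve constant---not from assembling ``anomalous local densities'' at primes dividing $a$.

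Your anticipated difficulty~(i), that Brun--Titchmarsh breaks down for $q>x^{1/2}$, is an artifact of using the wrong input: the twin-prime sieve bound above is valid uniformly for all $r\le x/s$, so no separate treatment of large prime factors is needed. Once \eqref{sieve} is verified with $\lambda=2$, Theorem~\ref{thm:main} gives the stated bound immediately (note $\pi(x)/\log x\asymp x/\log^{2}x$), with no further induction or cofactor argument on your part.
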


\subsection{Application: the mean of twin primes}

Hardy and Littlewood conjectured in 1922 that
the number of prime $p\le x$ with $p+2$ also prime
is asymptotic to $C x/\log^2 x$ for some constant $C$.
At present, it is not known that there are infinitely
many such twin prime pairs.  Here we focus on the number of prime factors
of $p+1$ for such primes.

\begin{cor}\label{cor:twins}
Uniformly for $k\in \NN$ we have
\[
\# \{4<n\le x : n-1 \text{ and } n+1 \text{ are both prime}, \omega\big( \frac{n}{6} \big)=k\}
\ll x \frac{(\log\log x+O(1))^{k-1}}{(k-1)! \log^3 x}.
\]
\end{cor}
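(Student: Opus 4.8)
The plan is to reduce the count to a sifted-set problem in the single variable $m=n/6$ and then feed it into the main theorem of Section~\ref{sec:mainthm}. First I would dispose of the arithmetic of the configuration. For $n>4$ with both $n-1$ and $n+1$ prime, these are odd primes exceeding $3$, so $n$ is even; moreover exactly one of the three consecutive integers $n-1,n,n+1$ is divisible by $3$, and it cannot be either of the prime neighbours, so $3\mid n$. Hence $6\mid n$, the quantity $n/6$ is a genuine positive integer, and writing $n=6m$ the quantity to be bounded becomes
\[
T_k := \#\{ m\le x/6 : \omega(m)=k,\ 6m-1 \text{ and } 6m+1 \text{ are both prime}\}.
\]
Thus I must count integers $m$ with exactly $k$ distinct prime factors lying in the sifted set $\cS=\{m:\ (6m-1)(6m+1)\text{ has no prime factor }\le z\}$ with $z$ just below $\sqrt{6m}$, which is precisely the shape handled by the main theorem.

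Next I would set up the sieve data. The relevant polynomial is $F(m)=(6m-1)(6m+1)=36m^2-1$, whose number of roots modulo a prime $\ell$ is $\rho(\ell)=0$ for $\ell\in\{2,3\}$ (since $F\equiv -1$ there) and $\rho(\ell)=2$ for $\ell>3$ (the roots being $m\equiv\pm 6^{-1}$, as $36$ is a perfect square). Hence the sifting of $F$ has dimension $\kappa=2$, with an absolutely convergent singular series. The crucial point is that only an \emph{upper bound} sieve is needed: the two linear forms $6m\pm1$ are counted in residue classes simply by counting integers in arithmetic progressions, so the level of distribution is essentially $x^{1-o(1)}$ and no Bombieri--Vinogradov input (unavailable for twin primes) is required. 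With these local densities and this trivial level of distribution, the hypotheses of the main theorem hold with $\kappa=2$.

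Invoking the main theorem then yields a bound of the form $x\,V(z)\,(\log\log x+O(1))^{k-1}/(k-1)!$, where $V(z)=\prod_{\ell<z}(1-\rho(\ell)/\ell)\asymp(\log x)^{-2}$ accounts for the two-dimensional twin sieve while the additional factor $1/\log x$ comes from the largest prime factor of $m$ being free to range up to $x$; combining $\log^{-2}x$ with $\log^{-1}x$ produces the stated $\log^{-3}x$. To see the exponent directly, and to locate the difficulty, I would peel off the largest prime factor $P$ of $m$, writing $m=Pm_1$ where every prime factor of $m_1$ is smaller than $P$: for each fixed small part $m_1$ one is then counting primes $P\le x/(6m_1)$ for which $P$, $6m_1P-1$ and $6m_1P+1$ are simultaneously prime, a \emph{triple}-prime count that a dimension-$3$ upper bound sieve bounds by $\ll \mathfrak{S}(m_1)\,x/(m_1\log^3 x)$ with $\mathfrak{S}(m_1)$ a singular series; summing $\mathfrak{S}(m_1)/m_1$ over $m_1$ with $\omega(m_1)=k-1$ reproduces the factor $(\log\log x+O(1))^{k-1}/(k-1)!$.

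The main obstacle is precisely this uniformity. One must control the triple-prime sieve bound with \emph{uniform} dependence on the small modulus $6m_1$, so that the singular series $\mathfrak{S}(m_1)$ and the attendant $\tfrac{6m_1}{\phi(6m_1)}$-type factors do not accumulate when summed against $1/m_1$; and, more seriously, the argument must hold with complete uniformity in $k$, including the range $k\gg\log\log x$ where the naive Hardy--Ramanujan induction degrades. It is exactly to secure this uniformity across all $k$ while using only parity-insensitive, upper-bound sieve information that the machinery of Section~\ref{sec:mainthm} is needed; once that machinery is in place, Corollary~\ref{cor:twins} follows by the specialization above.
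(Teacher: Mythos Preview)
Your approach is essentially the paper's: set $\cS=\{n>4:n-1,\,n+1\text{ both prime}\}$, take $s=6$, and verify hypothesis \eqref{sieve} of Theorem~\ref{thm:main} by bounding, for each $r$, the number of primes $q\le x/(6r)$ with $6rq\pm1$ both prime; since $q$, $6rq-1$, $6rq+1$ must all be prime this is a dimension-$3$ upper-bound sieve, giving $\lambda=3$ and $g(r)=\tfrac1r\prod_{p\mid r,\,p>3}\tfrac{1-1/p}{1-3/p}\in\sG(2)$ with $G(x)=\log\log x+O(1)$. Your final peel-off-$P^+(m)$ paragraph is exactly this verification of \eqref{sieve}.

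Your middle paragraph, however, misreads what Theorem~\ref{thm:main} requires. There is no sieve-dimension parameter $\kappa$ or sifting product $V(z)$ in its statement; hypothesis \eqref{sieve} already asks for a bound on \emph{primes} $q$ with $qrs\in\cS$, so the correct sieve is three-dimensional from the outset and produces $\log^{-3}$ in a single step. There is no separate accounting of $\log^{-2}$ from a twin-prime sieve on $m$ plus an extra $\log^{-1}$ ``because the largest prime factor is free to range'': applying Theorem~\ref{thm:main} with $\lambda=2$ would only yield $\log^{-2}x$ in the denominator. Drop that detour and your argument coincides with the paper's.
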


Again, we divide by 6 because all such $n$ are divisible by 6.

Corollaries \ref{cor:shift}, \ref{cor:set}, \ref{cor:shift-set} and \ref{cor:twins}
represent only a small sample of the type of bounds attainable 
using Theorem \ref{thm:main} below.  For example, we obtain
conjecturally best-possible (in the case $k=O(\log\log x)$)
upper bounds on the number of $n\le x$
with $\omega(n)=k$, and
with $n-1$ prime, $n+1$ prime, $n+5$ prime, and such that
$n$ has only prime factors from a given set.

As with \eqref{HR}, we expect the left sides in the
corollaries to be of smaller order than the right sides
 when $k/\log\log x \to \infty$ as $x\to \infty$.
 We will return to this in a subsequent paper.

%
%
\section{Statement of the Main Theorem}\label{sec:mainthm}
%

Here we state our main theorem and prove Corollaries \ref{cor:shift}, \ref{cor:set}, \ref{cor:shift-set} and \ref{cor:twins}.

Let $\sG(A)$ denote the set of non-negative multiplicative functions satisfying
\be\label{GA}
g(p^v) \le \frac{A}{p^v}
\qquad (p\text{ prime},v\in \NN).
\ee

An immediate consequence of \eqref{GA} and Mertens' theorems is
\be\label{Gx}
G(x) := \sum_{p\le x} g(p) \le A(\log\log x+O(1)), \qquad (x\ge 2).
\ee

\begin{thm}\label{thm:main}
Let $\cS$ be a set of positive integers, and let $s$ be any integer
dividing every element of $\cS$.
Suppose that $g\in \sG(A)$, $x\ge s^2$, $B\ge \exp\{-\sqrt{\log x}\}$ and $\lambda>0$ is a constant
so that
\be\label{sieve}
\# \bigg\{\text{prime } q \le \frac{x}{rs}  : qrs\in \cS \bigg\} \le B x \frac{g(r)}{\log^\lambda (\frac{2x}{rs})} \qquad (1\le r \le x/s).
\ee
Then, uniformly for positive integers $k$,
\[
\# \bigg\{ n \le x, n\in \cS : \omega(n/s)=k \bigg\} \ll_{\lambda,A}
B x \frac{(G(x) + O_A(1))^{k-1}}{(k-1)!\log^\lambda x}.
\]
\end{thm}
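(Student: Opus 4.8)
The plan is to peel off the largest prime factor of $n/s$, feed the count of such primes into the sieve hypothesis \eqref{sieve}, and thereby reduce matters to a purely arithmetic sum over the cofactor, which is then bounded by a second induction on the number of prime factors. Write $P(t)$ for the largest prime factor of $t$. Every $n$ counted on the left is of the form $n=qrs$ with $m:=n/s$, $\omega(m)=k$, $q=P(m)$ and $r=m/q$; in the main case $q\parallel m$ one has $\omega(r)=k-1$, $q\ge P(r)$ and $q\le x/(rs)$, while the case $q^2\mid m$ yields an error term $E_k$ carrying an extra factor $1/q$. For fixed $r$ the admissible $q$ are counted by \eqref{sieve}, so
\[
\#\{n\le x,\ n\in\cS:\omega(n/s)=k\}\le Bx\,V_{k-1}+E_k,\qquad V_m:=\ssum{\omega(r)=m\\ P(r)<x/(rs)}\frac{g(r)}{\log^{\lambda}(2x/(rs))}.
\]
Here the constraint $P(r)<x/(rs)$ merely records that there is room for the prime $q$; since $P(r)\mid r$ it already forces $P(r)<\sqrt{x/s}$.

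I would emphasise that peeling the \emph{largest} prime is essential. The symmetric alternative $k\cdot\#\{\cdots\}=\sum_{q\mid m}1$ would replace $V_{k-1}$ by the \emph{unconstrained} sum $\sum_{\omega(r)=k-1}g(r)/\log^{\lambda}(2x/(rs))$, and for $\lambda>1$ the terms with $r$ close to $x/s$ (i.e. $r$ having a large prime factor) make that sum as large as $\gg1/\log x$, which overshoots the target. The room-constraint $P(r)<\sqrt{x/s}$ is exactly what excises those terms: in the induction for $V_m$ it guarantees that the prime being peeled off always lies below the square root of the current budget $2x/(rs)$, which keeps each weight $\log^{-\lambda}$ comparable to its value at the full budget.

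To bound $V_m$ I would induct on $m$, peeling $p=P(r)$ off $r=p^{b}r_1$ (so $P(r_1)<p$, $\omega(r_1)=m-1$); the terms with $b\ge2$ are summable via $\sum_p\sum_{b\ge2}g(p^{b})\ll_A1$ and join the error. Two features must be produced: the factor $1/m!$ and the saving $\log^{-\lambda}x$. The $1/m!$ comes from the telescoping inequality $\sum_{p\le z}g(p)\bigl(G(p^-)+C\bigr)^{m-1}\le\tfrac1m\bigl(G(z)+C\bigr)^{m}$ (writing $G(p^-)=\sum_{q<p}g(q)$; it follows from $a^{m}-b^{m}\ge m b^{m-1}(a-b)$ with $a=G(p)+C$, $b=G(p^-)+C$), which is legitimate because carrying an upper bound $z$ for the largest prime preserves the ordering $P(r_1)<p$ and so counts each $r$ once. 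The weight is handled by comparing $\log(2x/(prs))$ with $\log(2x/(rs))$: for $p$ below the square root of the budget the two logarithms differ by a factor $1+O_{\lambda}(\log p/\log x)$, and the primes near the square root contribute only lower-order terms. This should give $V_m\ll_{\lambda,A}(G(x)+O_A(1))^{m}/(m!\,\log^{\lambda}x)$.

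The hard part is propagating the \emph{sharp} base $G(x)+O_A(1)$, that is, keeping the coefficient of $G(x)$ equal to $1$ uniformly in $k$. Each inductive step loses a multiplicative factor $1+O_{\lambda,A}(1/\log\log x)$ from the weight comparison, and after $k-1$ steps this compounds to $\exp\{O((k-1)/\log\log x)\}$; this is harmless only because $G(x)\le A(\log\log x+O(1))$ by \eqref{Gx}, so that $(G(x)+C)^{k-1}\exp\{O((k-1)/\log\log x)\}\le(G(x)+C')^{k-1}$ with $C'=C+O_A(1)$. One must likewise verify that the near-square-root primes, the prime-power terms and the repeated-prime error $E_k$ all feed only into the additive $O_A(1)$ and never into the coefficient of $G(x)$; the cleanest way to see the coefficient is correct is to treat separately the range $P(m)>\sqrt{x}$, where the cofactor satisfies $r<\sqrt{x}/s$, the weight is $\asymp\log^{-\lambda}x$, and the unweighted bound $\sum_{\omega(r)=k-1}g(r)\le(G(x)+O_A(1))^{k-1}/(k-1)!$ delivers the coefficient $1$ directly. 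Finally the hypotheses $x\ge s^{2}$ (so that $\log(2x/s)\asymp\log x$) and $B\ge\exp\{-\sqrt{\log x}\}$ (so that genuinely negligible tails, of relative size $e^{-\sqrt{\log x}}$, are dominated by the main term $Bx/\log^{\lambda}x$) let me assemble the pieces into the stated bound.
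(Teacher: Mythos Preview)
Your overall architecture matches the paper's: peel off $q=P^+(n/s)$, apply \eqref{sieve}, and reduce to bounding $V_m=\sum_{\omega(r)=m,\,rP^+(r)\le x/s}g(r)\log^{-\lambda}(2x/(rs))$. The genuine difference is in how this sum is handled. You propose to induct on $m$, peeling $P^+(r)$ off one prime at a time, using the telescoping identity for the $1/m!$ and a comparison of $\log(2x/(prs))$ with $\log(2x/(rs))$ for the weight; you correctly identify the compounding-error hazard and absorb it via \eqref{Gx}. The paper instead proves this bound directly (its Lemma~\ref{omega-lem}) with no induction: it decomposes $r$ into ranges $[x/Q_{j-1},x/Q_j]$ with $Q_j=x^{1/2^j}$, observes that the constraint $rP^+(r)\le x$ forces $P^+(r)\le Q_{j-1}$, and applies the Rankin trick $g(r)\ll_A x^{-\alpha/2}g(r)^{1-\alpha}$ with $\alpha\asymp 1/\log Q_j$; the sum $\sum_{\omega(r)=m,\,P^+(r)\le Q_{j-1}}g(r)^{1-\alpha}$ is then at most $(G(x)+O_A(1))^m/m!$ in one stroke, and the factor $x^{-\alpha/2}=\exp\{-c\,2^j\}$ makes the $j$-sum converge geometrically. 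This buys a cleaner path that sidesteps your compounding issue and your separate $P(m)>\sqrt{x}$ range entirely; your inductive route is closer in spirit to the original Hardy--Ramanujan argument but requires the bookkeeping you outline. For the repeated-prime term $E_k$, the paper does not extract any ``extra factor $1/q$'' (that phrasing is imprecise): it splits into the cases $k>\log\log x$ and $k\le\log\log x$, in the former simply invoking the $\ell=k$ instance of Lemma~\ref{omega-lem} (the surplus factor $(G(x)+O(1))/k$ is then $O_A(1)$), and in the latter bounding $\#\{m\le x/s:P^+(m)^2\mid m\}\ll x\exp\{-10\sqrt{\log x}\}$ via \cite{IP}, which is precisely where the hypothesis $B\ge\exp\{-\sqrt{\log x}\}$ enters.
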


The proof of Theorem \ref{thm:main} will be given in the next section.
Here we discuss corollaries.

We first recover the original Hardy-Ramanujan inequality \eqref{HR}.
In this case $\cS=\NN$ and the left side of \eqref{sieve}
is $\pi(y/r)\ll (y/r)/\log(2y/r)$ by Chebyshev's estimates for primes.
Also, $g(r)=1/r$ for all $r$ and $g\in \sG(1)$.
Theorem \ref{thm:main} then implies \eqref{HR}.

\emph{Proof of Corollary \ref{cor:shift-set}}.
Let $\cS = \{ p+ a : p>-a \text{ prime}, \tfrac{p+a}{s}\in \cQ(\cE) \}$.  
Provided that $r\in \cQ(\cE)$, for all $y\ge rs$ we have by
a standard sieve bound (Corollary 2.4.1 in \cite{HR}) that
\begin{align*}
\# \bigg\{ q \le \frac{y}{rs}  :  qrs \in \cS \bigg\} &= \#
 \bigg\{ q \le \frac{y}{rs}  :  q, qrs-a\text{ both prime }\bigg\}\\
 &\ll \frac{|ars|}{\phi(|ars|)} \frac{y}{rs \log^2 (\frac{2y}{rs})}. 
\end{align*}
When $rs\not\in \cQ(\cE)$, the left side is zero.
Thus, defining $g(r)=1/\phi(r)$ when $r\in \cQ(\cE)$ and zero otherwise,
we see that \eqref{sieve} holds with $B \ll \frac{|a|}{\phi(|a|)}$.
Clearly $g \in \sG(2)$, and $G(x)=E(x)+O(1)$ since $g(p)=1/p+O(1/p^2)$
for $p\in \cE$.  Corollary \ref{cor:shift-set}
now follows from Theorem \ref{thm:main} when $x\ge 2|a|$ since
$x-a \asymp x$.
\qed

Corollary \ref{cor:shift} is a special case of 
Corollary \ref{cor:shift-set}, upon taking $\cE$ the set of all primes.

\medskip
\textbf{Remark.}  The author thanks Maciej Radziejewski for informing
him of a subtle issue, namely that
the set $g := \text{gcd} \{p+a : p+a\in \cQ(\cE) \}$ is not always
1 or 2.  For example if $\cE = \{3\}$ and $a=2$, then $g=9$.  When
$2|a$ and $\cE$ contains at least two odd primes, determination
of $g$ is a very difficult unsolved problem in general.  However,
we always have $s|g$, where $s$ is given in Corollary \ref{cor:shift-set}.
Thus, it is important in Theorem \ref{thm:main} that $s$ be \emph{any}
integer dividing every member of $S$.

\medskip

\emph{Proof of Corollary \ref{cor:set}}.
Let $\cS = \cQ(\cE)$.
Here we have $s=1$ (in particular, $1\in \cS$).
For any $r\le y$ we have
\[
\# \{ q\le y/r: qr\in \cS \} \le \begin{cases}
0 & \text{ if } r\not\in \cS \\
\pi(y/r) & \text{ if } r\in \cS.
\end{cases}
\]
By Chebyshev's bound for $\pi(x)$, 
 \eqref{sieve} holds with $\lambda=1$, $B=O(1)$ and
$g$ defined by $g(p^v)=1/p^v$ if $p\in \cE$, $g(p^v)=0$ if
$p\not\in \cE$.  Hence $g\in \sG(1)$ and $G(x)=E(x)$.
The Corollay follows from Theorem \ref{thm:main}.
\qed

\emph{Proof of Corollary \ref{cor:twins}}.
Let $\cS = \{n > 4: n-1, n+1\text{ both prime}\}$.
We have $s=6$.  By the sieve (e.g., Theorem 2.4 of \cite{HR}),
for any $r\le x/6$,
\begin{align*}
\# \{q\le \tfrac{x}{6r} : 6rq \in \cS \} \ll 
\frac{x g(r)}{\log^3\pfrac{x}{3r}}, \qquad
g(r) = \frac{1}{r} \sprod{p|r\\ p>3} \frac{1-1/p}{1-3/p}.
\end{align*}
Thus, \eqref{sieve} holds and $g\in \sG(2)$.   Since $g(p) = \frac{1}{p}+O\pfrac{1}{p^2}$,
 $G(x)=\log\log x + O(1)$,
and the corollary follows.
\qed

%
%
\section{Proof of Theorem \ref{thm:main}}

We begin with a technical lemma.  Here $P^+(r)$ is  the largest prime factor
of $r$, with $P^+(1):=0$.

\begin{lem}\label{omega-lem}
Let $\lambda\ge 0$ and $g\in \sG(A)$.  Uniformly for $x\ge 2$ and $\ell\ge 0$ we have
\[
\ssum{\omega(r)=\ell \\ r P^+(r)\le x} \frac{g(r)}{\log^\lambda (x/r)}
\ll_{A,\lambda} \frac{(G(x)+O_A(1))^{\ell}}{\ell!\log^\lambda x}.
\]
\end{lem}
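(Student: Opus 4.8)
The plan is to obtain the factor $1/\ell!$ by \emph{symmetrising} over the prime factorisation rather than by inducting on $\ell$, and to control the logarithmic weight by splitting the range of $r$. The one structural fact I would use at the outset is that the condition $rP^+(r)\le x$ forces $P^+(r)\le\sqrt x$, since $p:=P^+(r)$ divides $r$ and hence $p^2\le rP^+(r)\le x$. Accordingly I split the sum as $\Sigma=\Sigma_1+\Sigma_2$, where $\Sigma_1$ runs over $r\le\sqrt x$ and $\Sigma_2$ over $r>\sqrt x$.

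For $\Sigma_1$ the argument is clean and already yields the full bound. When $r\le\sqrt x$ the side condition is automatic (as $rP^+(r)\le r^2\le x$) and $\log(x/r)\ge\tfrac12\log x$, so that $\log^{-\lambda}(x/r)\le 2^\lambda\log^{-\lambda}x$. Enlarging the range to $P^+(r)\le\sqrt x$ (all terms being non-negative) and symmetrising over the $\ell$ distinct prime powers gives
\[
\ssum{\omega(r)=\ell\\ P^+(r)\le\sqrt x} g(r)=\frac1{\ell!}\ssum{p_1,\dots,p_\ell\le\sqrt x\\ \text{distinct}}\ \prod_{i=1}^{\ell}\Big(\sum_{a\ge1}g(p_i^{a})\Big)\le\frac1{\ell!}\Big(\sum_{p\le\sqrt x}\sum_{a\ge1}g(p^{a})\Big)^{\ell}.
\]
By \eqref{GA} the inner double sum equals $G(\sqrt x)+O_A(1)\le G(x)+O_A(1)$, whence $\Sigma_1\ll_{A,\lambda}(G(x)+O_A(1))^{\ell}/(\ell!\log^\lambda x)$.

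The range $r>\sqrt x$ is where the work lies. Here $\log(x/r)$ may be as small as $\log P^+(r)$, so the weight is no longer comparable to $\log^{-\lambda}x$, and the point is that the $g$-mass carried by such $r$ is correspondingly sparse. I would first use $x/r\ge P^+(r)$ to bound the weight by $(\log P^+(r))^{-\lambda}$, then peel off the largest prime, writing $r=mp^{v}$ with $p=P^+(r)<\sqrt x$ and $P^+(m)<p$. Crucially, once the weight has been detached from the size of $m$, the cofactor sum over $m$ converges with no upper restriction (its prime factors all lie below $p$), so symmetrisation applies freely:
\[
\Sigma_2\le\sum_{p<\sqrt x}\frac{1}{(\log p)^{\lambda}}\Big(\sum_{v\ge1}g(p^{v})\Big)\ssum{\omega(m)=\ell-1\\ P^+(m)<p}g(m)\le\frac1{(\ell-1)!}\sum_{p<\sqrt x}\frac{g(p)+O_A(1/p^{2})}{(\log p)^{\lambda}}\,(G(p)+O_A(1))^{\ell-1}.
\]
Writing $K$ for the implied $O_A(1)$ constant, $p^-$ for the prime preceding $p$, and $\Delta_p=(G(p)+K)^{\ell}-(G(p^-)+K)^{\ell}\ge\ell\,g(p)(G(p^-)+K)^{\ell-1}$, an Abel summation of $\sum_p\Delta_p(\log p)^{-\lambda}$ against the weight $(\log p)^{-\lambda}$ produces the main term $\tfrac1\ell\,(G(\sqrt x)+K)^{\ell}/(\log\sqrt x)^{\lambda}$, which is exactly of the required shape and supplies both the $1/\ell!$ and the $\log^{-\lambda}x$.

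The main obstacle is to control the Abel-summation remainder $\int_2^{\sqrt x}(G(t)+K)^{\ell}\,t^{-1}(\log t)^{-\lambda-1}\,dt$ uniformly in $\ell$. Changing variables to $w=G(t)$ recasts it as (a constant times) $\int_0^{G(x)}(w+K)^{\ell}e^{-\lambda w/A}\,dw$, which is dominated by its endpoint — and hence is $\ll (G(x)+K)^{\ell}\log^{-\lambda}x$ — for $\ell$ outside a window around $\ell\asymp\lambda\log\log x$; in that critical window the crude bound $\log(x/r)\ge\log P^+(r)$ loses a factor of size about $\sqrt{\log\log x}$. I expect this to be the delicate step: recovering the correct bound there requires keeping the genuine weight $\log(x/r)$ (equivalently, retaining the size of the cofactor $m$) in place of $\log P^+(r)$, and it is this uniform treatment across all $\ell$, rather than the production of the factorial, that is the crux of the proof.
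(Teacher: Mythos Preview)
Your treatment of $\Sigma_1$ is correct and is essentially the $j=1$ case of the paper's argument. The gap is in $\Sigma_2$, and it is wider than you diagnose: the bound fails not only in a window $\ell\asymp\lambda\log\log x$ but for \emph{every} $\ell\ll\log\log x$ once $\lambda>0$. The problem is that after you replace $\log(x/r)$ by $\log P^+(r)$ and drop the restriction $r>\sqrt x$, small primes $p$ contribute with weight $\asymp(p\log^\lambda p)^{-1}$, and $\sum_p (p\log^\lambda p)^{-1}$ converges to a positive constant. Concretely, for $\ell=1$ your upper bound for $\Sigma_2$ is $\sum_{p<\sqrt x}(\log p)^{-\lambda}\sum_{v\ge1}g(p^v)\asymp_A 1$, while the target is $(G(x)+O_A(1))/\log^\lambda x\to 0$. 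In your Abel picture, the integral $\int_0^{G(x)}(w+K)^\ell e^{-\lambda w/A}\,dw$ is \emph{not} dominated by the right endpoint for small $\ell$; it is of order $\Gamma(\ell+1)(A/\lambda)^{\ell+1}$, which dwarfs $(G(x)+K)^\ell(\log x)^{-\lambda}$. The information you discarded --- that $r$ is genuinely large, hence a smooth number --- is exactly what makes the true $\Sigma_2$ small in this regime.

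The paper's argument is organised differently and never decouples the size of $r$ from its smoothness. It slices the range of $r$ dyadically in the exponent, setting $Q_j=x^{1/2^j}$ and taking $r\in[x/Q_{j-1},x/Q_j]$; on each slice $\log(x/r)\ge\log Q_j=2^{-j}\log x$, so the weight costs only a factor $2^{\lambda j}$, while the condition $rP^+(r)\le x$ forces $P^+(r)\le Q_{j-1}$. The new idea is to exploit the \emph{size} of $r$ by a Rankin twist: writing $g(r)=g(r)^\alpha g(r)^{1-\alpha}$ with $\alpha=1/(20\log Q_j)$ and using $g(r)\ll_A r^{-1/2}$ extracts a factor $r^{-\alpha/2}\ll x^{-\alpha/2}=\exp(-2^j/40)$, and the residual sum $\sum_{\omega(r)=\ell,\,P^+(r)\le Q_{j-1}}g(r)^{1-\alpha}$ is bounded by $(G(x)+O_A(1))^\ell/\ell!$ by the same symmetrisation you used. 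Since $\sum_{j\ge1}2^{\lambda j}e^{-2^j/40}<\infty$, summing over $j$ gives the lemma uniformly in $\ell$. In short, the missing ingredient in your $\Sigma_2$ is a mechanism to trade the largeness of $r$ for savings; Rankin's trick supplies it, and the doubly-dyadic slicing makes the trade uniform.
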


\begin{proof}
If $\ell=0$ then the only summand corresponds to $r=1$ and the result is trivial.
Now suppose $\ell\ge 1$.  Then $2\le r\le x/2$.
We separately consider $r$ in special ranges.  Let 
$Q_j = x^{1/2^j}$ for $j\ge 0$ and define
\[
\sT_j = \bigg\{ r \in \bigg[2,\frac{x}{2} \bigg] \cap \bigg[\frac{x}{Q_{j-1}}, \frac{x}{Q_j}\bigg] : \omega(r)=\ell, rP^+(r)\le x \bigg\}, \quad j\ge 1.
\]
For $r\in \sT_j$, we have $P^+(r) \le x/r \le Q_{j-1}$.
Also,  if $\sT_j$ is nonempty then $Q_{j-1} \ge 2$ and $j\ge 1$.  We have
\begin{align*}
\sum_{r\in \sT_j} \frac{g(r)}{\log^\lambda (x/r)} &\le \frac{1}{\log^\lambda Q_j} \ssum{r\in \sT_j} g(r).
\end{align*}
For the sum on the right side,
we use the ``Rankin trick'' familiar from the study of smooth numbers.
 Let $\alpha = \frac{1}{20\log Q_j}$.
Since $Q_{j-1} \ge 2$, $Q_j \ge \sqrt{2}$ and thus $0<\alpha \le \frac16$.
From the definition \eqref{GA} of $\sG(A)$ we have 
\[
g(m) \le \frac{A^{\omega(m)}}{m} \ll_A m^{-1/2}.
\]
Hence, when $r\ge x/Q_{j-1}$,
\[
g(r) = g(r)^\alpha g(r)^{1-\alpha} \ll_A r^{-\alpha/2} g(r)^{1-\alpha}
\ll x^{-\alpha/2} g(r)^{1-\alpha},
\]
since $(x/r)^{\alpha/2} \le Q_{j-1}^{\alpha/2} =Q_j^{\alpha} = \er^{1/20}$.
Thus,
\be\label{lem-1}
\sum_{r\in \sT_j} \frac{g(r)}{\log^\lambda (x/r)} \ll \frac{x^{-\alpha/2}}{\log^\lambda Q_j} \sum_{r\in \sT_j} g(r)^{1-\alpha} \le 
\frac{x^{-\alpha/2}}{\log^\lambda Q_j} \ssum{P^+(r)\le Q_{j-1} \\ \omega(r)=\ell} g(r)^{1-\alpha}.
\ee
Using \eqref{GA} again,
\begin{align*}
\ssum{P^+(r)\le Q_{j-1} \\ \omega(r)=\ell} g(r)^{1-\alpha} &\le \frac{1}{\ell!}
\bigg\{ \sum_{p\le Q_{j-1}} g(p)^{1-\alpha}+g(p^2)^{1-\alpha}+\cdots\bigg\}^{\ell} \\
&= \frac{1}{\ell!}
\bigg\{ O_A(1)+ \sum_{p\le Q_{j-1}} g(p)^{1-\alpha} \bigg\}^{\ell}.
\end{align*}
If $g(p)\ge 1/p^2$ we have $g(p)^{-\alpha} \le p^{2\alpha}=1+O(\alpha \log p)$
when $p\le Q_{j-1}$.  Hence,
\begin{align*}
\sum_{p\le Q_{j-1}} g(p)^{1-\alpha} &\le \sum_{g(p) <1/p^2} g(p)^{5/6}
+ \ssum{p\le Q_{j-1} \\ g(p) \ge 1/p^2} g(p)(1+O(\alpha \log p)) \\
&\le O(1) + G(Q_{j-1}) + O_A (1),
\end{align*}
using \eqref{GA} again plus Mertens' theorems.
Thus,
\[
\ssum{P^+(r) \le Q_{j-1} \\ \omega(r)=\ell} g(r)^{1-\alpha} \le 
\frac{(G(x)+O_A(1))^\ell}{\ell!}.
\]
Inserting the last bound into \eqref{lem-1}, we see that for each $j$,
\begin{align*}
\sum_{r\in \sT_j} \frac{g(r)}{\log^\lambda (x/r)} 
&\le \frac{2^{\lambda j} \exp\big\{-\frac1{40}\cdot 2^j\big\}}{\log^\lambda x}\, \frac{(G(x) +O_A(1))^{\ell}}{\ell!}.
\end{align*}
Summing over $j$ completes the proof.
\end{proof}


\emph{Proof of Theorem \ref{thm:main}}.
Let $n\le x, n\in \cS$ and $\omega(n/s)=k$.
Define $q=P^+(n/s)$ and write $n=qrs$.  
If $q\nmid r$ then $\omega(r)=k-1$, and if $q|r$ then $\omega(r)=k$.
  Also, $rP^+(r) \le rq = n/s \le x/s$.
It follows that $r\in \sR_{k-1} \cup \sR_{k}$, where
\[
\sR_\ell = \{ r\in \NN : r P^+(r)\le x/s, \omega(r)=\ell \}.
\]
Using \eqref{sieve}, followed by Lemma \ref{omega-lem}, we have
for $\ell \in \{k-1,k\}$ the bounds
\be\label{omegarl}
\begin{split}
\# \{n\le x, n\in \cS : \omega(r)=\ell \} &\le 
\sum_{r\in \cR_{\ell}} \# \{ q\le x/(rs) : qrs \in \cS \} \\
&\le B x \sum_{r\in \cR_{\ell}} \frac{g(r)}{\log^\lambda (\frac{2x}{rs})}\\
&\ll_{\lambda,A} Bx \frac{(G(x)+O_A(1))^{\ell}}{\ell! \log ^\lambda (x/s)}\\
&\ll_{\lambda,A} Bx \frac{(G(x)+O_A(1))^{\ell}}{\ell! \log ^\lambda x},
\end{split}
\ee
using that $x\ge s^2$ in the last step.

When $k>\log\log x$ we use the crude estimate
$G(x) \ll_A \log\log x$ from \eqref{Gx} and deduce from \eqref{omegarl} that
\begin{align*}
\# \{n\le x, n\in \cS: \omega(n)=k \} &\ll_{\lambda,A}
Bx \frac{(G(x)+O_A(1))^{k-1}}{(k-1)! \log ^\lambda x}\bigg(1 + 
\frac{G(x)+O_A(1)}{k} \bigg) \\  &\ll_{\lambda,A} Bx \frac{(G(x)+O_A(1))^{k-1}}{(k-1)! \log ^\lambda x}.
\end{align*}

If $1\le k\le \log\log x$, we keep the $\ell=k-1$ term
from \eqref{omegarl}
and bound the $\omega(r)=k$ term in a different way.
If $\omega(r)=k$ then $q^2|(n/s)$.  Thus, using
 a crude version of the main theorem in \cite{IP}, and the
 hypothesized bound $B\ge  \exp\{-\sqrt{\log x}\}$, we deduce that
\begin{align*}
\# \{n\le x, n\in \cS : \omega(r)=k \}  &\le \# \{m \le x/s : P^+(m)^2|m \}\\
&\ll x \exp \{- 10\sqrt{\log x} \} \ll_{A,\lambda} B x \frac{(G(x)+O_A(1))^{k-1}}{(k-1)! (\log^\lambda x)}.
\end{align*}
The proof is complete.
\qed

\textbf{Acknowledgements}.  The author thanks R\'egis de la Bret\`eche
for drawing his attention to \cite{G} and Lemma 1 of \cite{ten00}.


\end{document}